\newtheorem{thm}{Theorem}[section]
\newtheorem{prp}[thm]{Proposition}
\newtheorem{crl}[thm]{Corollary}
\theoremstyle{definition}
\theoremstyle{remark}
\newtheorem*{rmk}{Remark}
\title{On model structure for coreflective subcategories of a model category} 
\author{Tadayuki Haraguchi} 
\date{\empty}
\begin{document}
\maketitle
\section{Introduction}

Let $\bf C$ be a coreflective subcategory of a cofibrantly generated model category $\bf D$.
In this paper we show that under suitable conditions
$\bf C$ admits a cofibrantly generated model structure which is left Quillen adjunct to the model structure on $\bf D$.
As an application,
we prove that well-known convenient categories of topological spaces,
such as $k$-spaces,
compactly generated spaces,
and $\Delta$-generated spaces \cite{DN} (called numerically generated in \cite{KKH}) admit a finitely generated model structure which is Quillen equivalent to the standard model structure on
the category $\bf Top$ of topological spaces.
\section{Coreflective subcategories of a model category}
Let $\bf D$ be a cofibrantly generated model category {\cite[2.1.17]{Hov}}
with generating cofibrations $I$,
generating trivial cofibrations $J$ and the class of weak equivalences $W_{\bf D}$.
If the domains and codomains of $I$ and $J$ are finite relative to
$I$-cell {\cite[2.1.4]{Hov}}, then $\bf D$ is said to be finitely
generated.

Recall that a subcategory $\bf C$ of $\bf D$ is said to be
coreflective if the inclusion functor $i \colon {\bf C} \to {\bf D}$
has a right adjoint $G \colon {\bf D} \to {\bf C}$, so that there is a
natural isomorphism $\varphi \colon {\rm Hom}_{\bf D}(X,Y) \to {\rm
  Hom}_{\bf C}(X,GY)$.  The counit of this adjunction $\epsilon
\colon GY \to Y\ (Y \in {\bf D})$ is called the coreflection arrow.
\begin{thm}\label{thm:cofibrant sub}
  Let $\bf C$ be a coreflective subcategory of a cofibrantly generated
  model category $\bf D$ which is complete and cocomplete. %
  Suppose that the unit of the adjunction $\eta \colon X \to GX$ is a
  natural isomorphism, and that the classes $I$ and $J$ of
  cofibrations and trivial cofibrations in $\bf D$ are contained in
  $\bf C$. %
  Then $\bf C$ has a cofibrantly generated model structure with $I$ as
  the set of generating cofibrations, $J$ as the set of generating
  trivial cofibrations, and $W_{\bf C}$ as the class of weak
  equivalences, %
  where $W_{\bf C}$ is the class of all weak equivalences contained in
  $\bf C$. %
  If $\bf D$ is finitely generated, then so is $\bf C$. %
  Moreover, the adjunction $(i,G,\varphi) \colon {\bf C} \to {\bf D}$
  is a Quillen adjunction in the sense of {\cite[1.3.1]{Hov}}.
\end{thm}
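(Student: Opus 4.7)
The plan is to verify the hypotheses of the recognition theorem for cofibrantly generated model categories \cite[2.1.19]{Hov} applied to $\bf C$ with the proposed data $(W_{\bf C}, I, J)$. The work splits cleanly once one notices two compatibility facts between $\bf C$ and $\bf D$. First, because $i$ is a left adjoint it preserves colimits, and because $\eta$ is an isomorphism the inclusion $i$ is fully faithful, so any small colimit in $\bf D$ of a diagram lying in $\bf C$ is again in $\bf C$; combined with the hypothesis $I, J \subseteq \bf C$, this means that the classes $I$-cell and $J$-cell (built by pushouts and transfinite composition) are the same whether computed in $\bf C$ or in $\bf D$. Second, since $\bf C$ is a full subcategory of $\bf D$, every lifting problem between two morphisms of $\bf C$ has the same solution set in $\bf C$ as in $\bf D$; in particular $I$-inj, $I$-cof, $J$-inj, $J$-cof computed in $\bf C$ coincide with the corresponding $\bf D$-classes intersected with the morphisms of $\bf C$.

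Given these identifications, each hypothesis of the recognition theorem in $\bf C$ becomes an immediate consequence of the corresponding property in $\bf D$: the class $W_{\bf C} = W_{\bf D} \cap \bf C$ inherits two-out-of-three and retract closure from $W_{\bf D}$; the domains of $I$ and $J$ remain small relative to $I$-cell and $J$-cell in $\bf C$ because the cellular classes agree; the inclusion $J$-cell $\subseteq W_{\bf C} \cap I$-cof holds because it holds in $\bf D$ and both sides already lie in $\bf C$; and $I$-inj $\subseteq W_{\bf C} \cap J$-inj because in $\bf D$ every $I$-injective is a trivial fibration. For the remaining compatibility clause I would verify $W_{\bf C} \cap J$-inj $\subseteq I$-inj: a morphism of $\bf C$ that is both a $\bf D$-weak equivalence and a $\bf D$-fibration is a trivial fibration in $\bf D$, hence $I$-injective in $\bf D$, hence $I$-injective in $\bf C$ by the second compatibility fact.

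The remaining statements are immediate. The finite-generation claim follows because the finiteness of the domains and codomains of $I$ and $J$ relative to $I$-cell descends from $\bf D$ to $\bf C$ by the first compatibility fact. For the Quillen adjunction $(i, G, \varphi)$, the left adjoint $i$ is the inclusion, and cofibrations (resp.\ trivial cofibrations) in $\bf C$ are by construction contained in the cofibrations (resp.\ trivial cofibrations) of $\bf D$, so $i$ is a left Quillen functor. The only real obstacle is the ``in $\bf C$ versus in $\bf D$'' bookkeeping, which rests entirely on the two compatibility observations above; both depend crucially on $\eta$ being an isomorphism (giving fullness of $i$) and on $I, J$ literally lying in $\bf C$ (keeping cellular constructions inside $\bf C$). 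Once these are in hand, no further homotopical input is needed beyond invoking the recognition theorem.
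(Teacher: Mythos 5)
Your verification of the recognition theorem {\cite[2.1.19]{Hov}} is essentially sound, but your ``second compatibility fact'' is overstated, and the overstatement is load-bearing exactly at the Quillen-adjunction step. Fullness of $i$ (which does follow from $\eta$ being an isomorphism) correctly identifies the \emph{injective} classes: since $I, J \subseteq \mathbf{C}$, every lifting problem testing $I$- or $J$-injectivity of a map of $\mathbf{C}$ lies wholly in $\mathbf{C}$, so $I\text{-inj}_{\mathbf{C}} = I\text{-inj}_{\mathbf{D}} \cap \mathbf{C}$ and likewise for $J$. But for the \emph{cofibration} classes only one inclusion is formal: $I$-cofibrancy in $\mathbf{C}$ is tested against the smaller class $I\text{-inj}_{\mathbf{D}} \cap \mathbf{C}$, whereas $I$-cofibrancy in $\mathbf{D}$ is tested against all $I$-injectives of $\mathbf{D}$, whose domains and codomains need not lie in $\mathbf{C}$; those lifting problems are not ``between two morphisms of $\mathbf{C}$,'' so fullness says nothing about them. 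Thus $I\text{-cof}_{\mathbf{D}} \cap \mathbf{C} \subseteq I\text{-cof}_{\mathbf{C}}$ is immediate, but the reverse inclusion --- the one you invoke when you claim cofibrations and trivial cofibrations in $\mathbf{C}$ are ``by construction'' contained in those of $\mathbf{D}$, making $i$ left Quillen --- is not formal and is the genuinely nontrivial point. The paper proves it: given $f \in I\text{-cof}_{\mathbf{C}}$, factor $f$ by the small object argument as a relative $I$-cell complex $g \colon X \to Z$ followed by an $I$-injective $p \colon Z \to Y$; since $I \subseteq \mathbf{C}$ and cellular colimits of diagrams in $\mathbf{C}$ stay in $\mathbf{C}$ (your first fact), $g$ and $p$ lie in $\mathbf{C}$, so $f$ is a retract of $g$ by {\cite[2.1.15]{Hov}}, and the lift of $g$ against any $I$-injective of $\mathbf{D}$ composed with the retraction produces a lift of $f$. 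Without this retract argument your assertion that the cof-classes agree is unproved.

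Note that the gap is localized: your verification of the six recognition-theorem hypotheses only uses the inj-class identifications and the easy containment $I\text{-cof}_{\mathbf{D}} \cap \mathbf{C} \subseteq I\text{-cof}_{\mathbf{C}}$ (your choice of the second disjunct $W_{\mathbf{C}} \cap J\text{-inj} \subseteq I\text{-inj}$ is exactly right and matches the paper), so the model structure itself survives as you argued. For the Quillen adjunction you have two repairs: supply the retract argument above so that $i$ visibly preserves (trivial) cofibrations, or do what the paper actually does --- bypass the hard direction entirely by showing the right adjoint $G$ preserves $J$-injective and $I$-injective maps, which by {\cite[1.3.4]{Hov}} suffices; there one transports a lifting problem along the counit $\epsilon \colon GX \to X$, lifts in $\mathbf{D}$, and returns via $Gh \circ \eta$. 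As written, though, your proof of the final claim rests on an identification that fullness alone does not deliver.
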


\begin{proof}
  It suffices to show that $\bf C$ satisfies the six conditions of
  {\cite[2.1.19]{Hov}} with respect to $I$, $J$ and $W_{\bf C}$.
  Clearly, the first condition holds because $W_{\bf C}$ satisfies the
  two out of three property and is closed under retracts.
  To see that the second and the third conditions hold, let $I_{\bf
    C}$-cell and $J_{\bf C}$-cell be the collections of relative
  $I$-cell and $J$-cell complexes contained in $\bf C$, respectively.
  Since $I_{\bf C}$-cell and $J_{\bf C}$-cell are subcollections of
  the collections of relative $I$-cell and $J$-cell complexes in $\bf
  D$, respectively, the domains of $I$ and $J$ are small relative to
  $I_{\bf C}$-cell and $J_{\bf C}$-cell, respectively.
  The rest of the conditions are verified as follows.  Let $f\colon X
  \to Y$ be a map in $\bf C$.  Since $\eta \colon X \to GX$ is
  isomorphic for $X \in {\bf D}$, $f$ is $I$-injective in $\bf C$ if
  and only if it is $I$-injective in $\bf D$.  Similarly, $f$ is
  $J$-injective in $\bf C$ if and only if it is $J$-injective in $\bf
  D$.  Let $f$ be an $I$-cofibration in $\bf D$.  Then it has the left
  lifting property with respect to all $I$-injective maps in $\bf C$.
  Hence $f$ is an $I$-cofibration in $\bf C$.  Conversely, let $f$ be
  an $I$-cofibration in $\bf C$.  Suppose we are given a commutative
  diagram
  \[
   \begin{CD}
    X
    @>>>
    A
    \\
    @V f VV 
    @V p VV
    \\
    Y
    @>>>
    B
   \end{CD}
  \]
  where $p$ is $I$-injective in $\bf D$.  Then there is a relative
  $I$-cell complex $g \colon X \to Z$ {\cite[2.1.9]{Hov}} such that
  $f$ is a retract of $g$ by {\cite[2.1.15]{Hov}}.  Since $g$ is an
  $I$-cofibration in $\bf D$, there is a lift $Z \to A$ of $g$ with
  respect to $p$.  Then the composite $Y \to Z \to A$ is a lift of $f$
  with respect to $p$.  Therefore $f$ is an $I$-cofibration in $\bf
  D$.  Similarly, $f$ is a $J$-cofibration in $\bf C$ if and only if
  it is a $J$-cofibration in $\bf D$.  Thus we have the desired
  inclusions
  \begin{itemize}
  \item $J_{\bf C}$-cell $\subseteq W_{\bf C} \cap I_{\bf C}$-cof,
  \item $I_{\bf C}$-inj $\subseteq W_{\bf C} \cap J_{\bf C}$-inj, and
  \item either $W_{\bf C} \cap I_{\bf C}$-cof $\subseteq J_{\bf
      C}$-cof or $W_{\bf C} \cap J_{\bf C}$-inj $\subseteq I_{\bf
      C}$-inj.
  \end{itemize}
  Here $I_{\bf C}$-inj and $I_{\bf C}$-cof denote, respectively, the classes of
  $I$-injective maps and $I$-cofibrations in $\bf C$, and similarly
  for $J_{\bf C}$-inj and $J_{\bf C}$-cof. %
  Therefore $\bf C$ is a cofibrantly generated model category by
  {\cite[2.1.19]{Hov}}.

  It is clear, by the definition, that $\bf C$ is finitely generated
  if so is $\bf C$.

  Finally, to prove that $(i,G,\varphi)$ is a Quillen adjunction, it
  suffices to show that $G \colon \bf D \to C$ is a right
  Quillen functor, 
  or equivalently, 
  $G$ preserves $J$-injective maps in $\bf D$ by {\cite[1.3.4]{Hov}} and {\cite[2.1.17]{Hov}}.
  Let $p \colon X \to Y$ be a $J$-injective map in $\bf D$.  Suppose
  there is a commutative diagram
  \[
   \begin{CD}
    A
    @>>>
    GX 
    \\
    @V f VV
    @V Gp VV
    \\
    B
    @>>>
    GY
   \end{CD}
  \]
  where $f \in J$.
  Then we have a commutative diagram
  \[
  \begin{CD}
    A @>>> GX @> \epsilon >> X
    \\
    @V f VV @. @V p VV
    \\
    B @>>> GY @> \epsilon >> Y.
  \end{CD}
  \]
  Since $p$ is $J$-injective in $\bf D$, there is a lift $h \colon B
  \to X$ of $f$.  Thus we have a lift $Gh \circ \eta \colon B \cong
  GB \to GX$ of $f$ with respect to $Gp$.  Therefore $Gp \colon GX
  \to GY$ is $J$-injective in $\bf C$.  Similarly, we can show that $G$
  preserves $I$-injective maps in $\bf C$, and so $G$ preserves
  trivial fibrations in $\bf C$.
Hence $(i,G,\varphi)$ is a Quillen adjunction.
\end{proof}

We turn to the case of pointed categories {\cite[p.4]{Hov}}.
Let $\bf D_{\ast}$ be the pointed category associated with $\bf D$, 
and let $U \colon \bf D_{\ast} \to D$ be the forgetful functor.
We denote by $I_{+}$ and $J_{+}$ the classes of those maps $f \colon X \to Y$ in $\bf D_{\ast}$ such that $Uf \colon UX \to UY$ belongs to $I$ and $J$,
respectively.
Then
we have the following.
(Compare {\cite[1.1.8]{Hov}}, {\cite[1.3.5]{Hov}}, and {\cite[2.1.21]{Hov}}.)
\begin{thm}\label{thm:pointed category}
Let $\bf D$ be a cofibrantly (resp.\ finitely) generated model category,
and let $\bf C$ be a coreflective subcategory satisfying the conditions of Theorem \ref{thm:cofibrant sub}.
Then the pointed category $\bf C_{\ast}$ has a cofibrantly (resp.\ finitely) generated model structure,
with generating cofibrations $I_{+}$ and generating trivial cofibrations $J_{+}$,
such that the induced adjunction $(i_{\ast},G_{\ast},\varphi_{\ast}) \colon \bf C_{\ast} \to D_{\ast}$ is a Quillen adjunction.
\end{thm}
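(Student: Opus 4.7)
The plan is to reduce the statement to a direct application of Theorem~\ref{thm:cofibrant sub}, taking $\bf D_\ast$ as the ambient cofibrantly generated model category and $\bf C_\ast$ as the coreflective subcategory. The first step is to invoke \cite[2.1.21]{Hov} applied to $\bf D$, which endows $\bf D_\ast$ with a cofibrantly (resp.\ finitely) generated model structure whose generating cofibrations and generating trivial cofibrations are precisely $I_+$ and $J_+$.

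The bulk of the work is then to verify the hypotheses of Theorem~\ref{thm:cofibrant sub} for the inclusion $\bf C_\ast \subset \bf D_\ast$. Since $G$ is a right adjoint it preserves the terminal object, and the iso-unit hypothesis canonically identifies $G\ast_\bf D$ with $\ast_\bf D$, so that $G$ induces a functor $G_\ast\colon \bf D_\ast \to \bf C_\ast$ sending a pointed object $(Y,y)$ to $(GY, Gy)$; the left adjoint $i_\ast$ and the adjunction isomorphism $\varphi_\ast$ are obtained analogously by restriction of $i$ and $\varphi$ to basepoint-preserving maps. The category $\bf C_\ast$ inherits completeness and cocompleteness from $\bf C$. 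The unit $\eta_\ast$ of the lifted adjunction is a natural isomorphism, because its underlying map in $\bf C$ is $\eta$, which is iso by hypothesis. And the inclusions $I_+, J_+ \subset \bf C_\ast$ follow directly from $I, J \subset \bf C$ together with the definition of $I_+, J_+$ as the classes of maps in $\bf D_\ast$ whose images under $U$ lie in $I, J$.

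Once these are in hand, Theorem~\ref{thm:cofibrant sub} applied to $\bf C_\ast \subset \bf D_\ast$ produces simultaneously the desired cofibrantly (resp.\ finitely) generated model structure on $\bf C_\ast$ with generating sets $I_+, J_+$, and the Quillen adjunction $(i_\ast, G_\ast, \varphi_\ast)$. The main obstacle is the careful setup of the pointed adjunction; in particular, one must verify that $\ast_\bf D$ lies in $\bf C$ so that $\ast_\bf C = G\ast_\bf D$ and $\ast_\bf D$ can be identified through $\eta$ and $i_\ast$ becomes a well-defined functor into $\bf D_\ast$. All the other verifications transfer routinely from the unpointed setting, since cofibrations, fibrations, and weak equivalences in a pointed model category are detected on underlying objects.
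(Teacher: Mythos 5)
Your route is the reverse of the paper's. The paper gives no written argument beyond its pointers to {\cite[1.1.8]{Hov}}, {\cite[1.3.5]{Hov}} and {\cite[2.1.21]{Hov}}: it first applies Theorem~\ref{thm:cofibrant sub} to obtain the cofibrantly generated model structure on $\bf C$ itself, and then pointifies --- {\cite[1.1.8]{Hov}} and {\cite[2.1.21]{Hov}} transfer the model structure and its generating sets from $\bf C$ to $\bf C_{\ast}$, while {\cite[1.3.5]{Hov}} transfers the Quillen adjunction $(i,G,\varphi)$ to $(i_{\ast},G_{\ast},\varphi_{\ast})$. You instead pointify $\bf D$ first and try to re-run Theorem~\ref{thm:cofibrant sub} for $\bf C_{\ast} \subset \bf D_{\ast}$. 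That reduction is attractive, but it stands or falls on a condition you name and never discharge: that $\ast_{\bf D}$ lies in $\bf C$. This does \emph{not} follow from the hypotheses of Theorem~\ref{thm:cofibrant sub}. Those hypotheses only place colimit-type data in $\bf C$ (the domains and codomains of $I$ and $J$, and colimits, since $i$ is a left adjoint), whereas the terminal object of $\bf D$ is a limit; all $G$ gives you is that $G\ast_{\bf D}$ is terminal in $\bf C$, with the counit $\epsilon \colon \ast_{\bf C} \to \ast_{\bf D}$ pointing the wrong way. Your opening claim that ``the iso-unit hypothesis canonically identifies $G\ast_{\bf D}$ with $\ast_{\bf D}$'' is circular: $\eta$ is invertible on objects of $\bf C$, so it yields $\ast_{\bf D} \cong G\ast_{\bf D}$ only once you already know $\ast_{\bf D} \in \bf C$.

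Without $\ast_{\bf D} \in \bf C$ the reduction collapses at its first step: an object of $\bf C_{\ast}$ is pointed by $\ast_{\bf C} = G\ast_{\bf D}$, and there is no natural way to regard it as pointed by $\ast_{\bf D}$, so $\bf C_{\ast}$ is not a subcategory of $\bf D_{\ast}$ with $i_{\ast}$ the literal inclusion; the left adjoint produced by {\cite[1.3.5]{Hov}} is instead the pushout $X \mapsto iX \amalg_{i\ast_{\bf C}} \ast_{\bf D}$, and Theorem~\ref{thm:cofibrant sub}, which is stated specifically for a coreflective inclusion with invertible unit, does not apply. The paper's order of operations avoids this entirely, since Hovey's pointification results require no compatibility between $\ast_{\bf C}$ and $\ast_{\bf D}$. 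To repair your argument you would need to add $\ast_{\bf D} \in \bf C$ as a hypothesis; note that it does hold in all of the paper's applications (in $\bf Top$ the codomain $D^{0}$ of the generating cofibration $S^{-1} \to D^{0}$ is the point, so the assumption $I \subseteq \bf C$ already forces $\ast \in \bf C$), and once it is granted the remaining verifications you list ($\eta_{\ast}$ an isomorphism, $I_{+}, J_{+} \subseteq \bf C_{\ast}$, completeness and cocompleteness of $\bf C_{\ast}$) do go through as you say, yielding a legitimate alternative proof.
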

We also have the following Proposition.
\begin{prp}\label{prp:Quillen equivalence} 
%
Suppose $\bf C$ and $\bf D$ satisfy the conditions of Theorem \ref{thm:cofibrant sub}.
Suppose,
further,
that the coreflection arrow $\epsilon \colon GY \to Y$ is a weak equivalence for any fibrant object $Y$ in $\bf D$.
Then the adjunctions $(i,G,\varphi) \colon \bf C \to D$ and $(i_{\ast},G_{\ast},\varphi_{\ast}) \colon \bf C_{\ast} \to D_{\ast}$ are Quillen equivalences.
\end{prp}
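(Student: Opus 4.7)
The plan is to invoke Hovey's characterization {\cite[1.3.13]{Hov}} of Quillen equivalences: since Theorem \ref{thm:cofibrant sub} already supplies a Quillen adjunction, it suffices to verify that for every cofibrant $X \in \bf C$ and every fibrant $Y \in \bf D$, a map $f \colon iX \to Y$ in $\bf D$ lies in $W_{\bf D}$ if and only if its adjoint $\tilde f \colon X \to GY$ lies in $W_{\bf C}$.

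First I would exploit the standard factorization $f = \epsilon_Y \circ i\tilde f$ (which uses only the triangle identity, trivially valid here since $\eta$ is an isomorphism). Since $\epsilon_Y$ is by hypothesis a weak equivalence whenever $Y$ is fibrant, the two-out-of-three property of $W_{\bf D}$ shows that $f \in W_{\bf D}$ if and only if $i\tilde f \in W_{\bf D}$. Because $i$ is the inclusion and $W_{\bf C}$ is defined as the class of weak equivalences of $\bf D$ lying in $\bf C$, this last condition is equivalent to $\tilde f \in W_{\bf C}$. This settles the Quillen equivalence for $(i,G,\varphi)$.

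For the pointed version $(i_{\ast},G_{\ast},\varphi_{\ast})$, the plan is to transport the previous argument across the forgetful functors. The model structures on $\bf C_{\ast}$ and $\bf D_{\ast}$ provided by Theorem \ref{thm:pointed category} are designed so that cofibrations, fibrations and weak equivalences are created by forgetting basepoints; in particular, an object is fibrant in $\bf D_{\ast}$ exactly when its underlying $\bf D$-object is fibrant, and the counit of $(i_{\ast},G_{\ast})$ has underlying map the counit $\epsilon$ of $(i,G)$. For cofibrant $(X,x_0) \in \bf C_{\ast}$ and fibrant $(Y,y_0) \in \bf D_{\ast}$, applying the forgetful functor to the pointed analogue of the factorization above reduces the question to the unpointed one just handled.

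The only real delicacy I expect is the bookkeeping for the pointed case, namely verifying that the forgetful functors create weak equivalences and fibrant objects and carry the pointed counit to $\epsilon$; once these are unpacked from the construction underlying Theorem \ref{thm:pointed category}, the argument reduces to a single application of two-out-of-three and the definition of $W_{\bf C}$.
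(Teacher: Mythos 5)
Your unpointed argument is exactly the paper's: the author also factors $f$ as $X \xrightarrow{\varphi f} GY \xrightarrow{\epsilon} Y$ (with $\varphi f = Gf \circ \eta$, using that $\eta$ is an isomorphism), notes $\epsilon$ is a weak equivalence for fibrant $Y$, and concludes by two-out-of-three and the definition of $W_{\bf C}$ that $f \in W_{\bf D}$ iff $\varphi f \in W_{\bf C}$, which is precisely the definition of Quillen equivalence (Hovey's 1.3.12; your citation of 1.3.13 is off by one but harmless). Where you diverge is the pointed half: the paper disposes of $(i_{\ast},G_{\ast},\varphi_{\ast})$ in one line by citing Hovey's Proposition 1.3.17, which says that any Quillen equivalence induces a Quillen equivalence on the associated pointed categories, whereas you re-prove that special case by hand, using that the model structures of Theorem \ref{thm:pointed category} are created by the forgetful functors (so fibrant objects and weak equivalences are detected on underlying objects, and the pointed counit forgets to $\epsilon$). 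Your route is self-contained and the bookkeeping you flag does check out against Hovey's 1.1.8-style construction of the pointed model structure; it also makes visible a small point the citation hides, namely that cofibrancy of $X$ is never actually used in either half. What it costs is duplicating a general result already available off the shelf, which is why the paper's proof is shorter; both arguments are correct.
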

\begin{proof}
Let $X$ be a cofibrant object in $\bf C$ and $Y$ a fibrant object in $\bf D$.
Let $f \colon X \to Y$ be a map in $\bf D$.
Then we have $\varphi f=Gf \circ \eta \colon X \cong GX \to GY$.
Since $f$ coincides with the composite $X \xrightarrow{\varphi f} GY \xrightarrow{\epsilon} Y$ and $\epsilon$ is a weak equivalence in $\bf D$,
$\varphi f$ is a weak equivalence in $\bf C$ if and only if $f$ is a weak equivalence in $\bf D$.
It follows by {\cite[1.3.17]{Hov}} that that the induced adjunction $(i_{\ast},G_{\ast},\varphi_{\ast})$ is a Quillen equivalence.
%
\end{proof}
\section{On a model structure of the category \bf NG}
In \cite{KKH} we introduced the notion of numerically generated spaces which turns out to be the same notion as $\Delta$-generated spaces introduced by Jeff Smith (cf.\ \cite{DN}) .
Let $X$ be a topological space.
A subset $U$ of $X$ is numerically open if for every continuous map
$P \colon V \to X$, where $V$ is an open subset of Euclidean space,
$P^{-1}(U)$ is open in $V$.
Similarly, $U$ is numerically closed if for every such map $P$,
$P^{-1}(U)$ is closed in $V$.
A space $X$ is called a numerically generated space 
if every numerically open subset is open in $X$.

Let $\bf NG$ denote the full subcategory of $\bf Top$ consisting of numerically generated spaces.
Then the category $\bf NG$ is 
cartesian closed {\cite[4.6]{KKH}}.
To any $X$ we can associate the numerically generated space topology,
denoted $\nu X$,
by letting $U$ open in $\nu X$ if and only if $U$ is numerically open in $X$.
Therefore we have a functor $\nu \colon \bf Top \to NG$ which takes $X$ to $\nu X$.
Clearly, the identity map $\nu X \to X$ is continuous.
By the results of {\cite[\S 3]{Hov}} the following holds.
\begin{prp}
The functor $\nu \colon \bf Top \to NG$ is a right adjoint to the inclusion functor $i \colon \bf NG \to Top$,
so that $\bf NG$ is a coreflective subcategory of $\bf Top$.
\end{prp}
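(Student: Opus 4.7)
The plan is to exhibit the adjunction $(i,\nu)$ by constructing, for each pair $(X,Y) \in \bf NG \times \bf Top$, a natural bijection
\[
  \mathrm{Hom}_{\bf Top}(iX, Y) \;\cong\; \mathrm{Hom}_{\bf NG}(X, \nu Y),
\]
realized on underlying sets by the identity. First I would record two preliminary observations. (i) The identity set map $\epsilon_Y \colon \nu Y \to Y$ is continuous because the topology on $\nu Y$ refines that on $Y$: every open subset of $Y$ is automatically numerically open. (ii) If $X$ is numerically generated then $\nu X = X$, so that $\nu \circ i = \mathrm{id}_{\bf NG}$; this will both give that the unit of the adjunction is an isomorphism (which is useful for applying Theorem \ref{thm:cofibrant sub}) and identify the candidate bijection with post-composition by $\epsilon_Y$.

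Next I would carry out the essential verification: every continuous map $f \colon iX \to Y$ with $X \in \bf NG$ lifts uniquely to a continuous map $\tilde f \colon X \to \nu Y$ with the same underlying function. For this I must check that $f^{-1}(U)$ is open in $X$ whenever $U \subseteq Y$ is numerically open. Since $X$ is numerically generated, it suffices to show that $f^{-1}(U)$ is numerically open in $X$. Given any continuous probe $P \colon V \to X$ with $V$ an open subset of a Euclidean space, the composite $f \circ P \colon V \to Y$ is continuous, so
\[
  P^{-1}(f^{-1}(U)) \;=\; (f \circ P)^{-1}(U)
\]
is open in $V$ by the definition of numerical openness of $U$ in $Y$. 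This is the only nontrivial step; the rest is formal.

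With this in hand, the inverse assignment sends $g \colon X \to \nu Y$ to $\epsilon_Y \circ ig \colon iX \to Y$, which is continuous as the composite of two continuous maps. Since both assignments are the identity on underlying functions, they are mutually inverse, and naturality in $X$ and $Y$ is immediate. This yields the required adjunction, with $\epsilon_Y$ as counit and the identity on $\bf NG$ as unit, and therefore exhibits $\bf NG$ as a coreflective subcategory of $\bf Top$. The only real obstacle is the lifting step above, which reduces entirely to the defining property of numerically generated spaces in terms of Euclidean probes.
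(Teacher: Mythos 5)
Your proposal is correct, and it is essentially the standard probe-lifting argument; the key step — that for $U$ numerically open in $Y$ and $f \colon iX \to Y$ continuous, the preimage $f^{-1}(U)$ is numerically open in $X$ because $(f \circ P)^{-1}(U)$ is open for every probe $P \colon V \to X$ — is exactly the right reduction, and the rest (continuity of $\epsilon_Y$ by refinement of topologies, $\nu i = \mathrm{id}$, mutual inverseness via the identity on underlying sets) is handled correctly. The comparison with the paper is a little unusual: the paper gives \emph{no} proof of this proposition at all, deferring to ``the results of \S 3'' of a citation (attributed to \cite{Hov}, though the surrounding text makes clear the intended reference is the development of numerically generated spaces in \cite{KKH}, which supplies completeness, cartesian closedness, and the basic properties of $\nu$). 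So your argument is a self-contained replacement for an external citation, which is a genuine gain in readability. One small point you elide: the statement quietly presupposes that $\nu$ really lands in $\bf NG$, i.e.\ that $\nu Y$ is itself numerically generated (equivalently, $\nu$ is idempotent). You treat this as part of the given setup, which is defensible since the paper asserts the functor $\nu \colon \bf Top \to \bf NG$ before the proposition; but it is worth noting that it follows from your own key lemma with no extra work: an open $V \subseteq \mathbf{R}^n$ is numerically generated (the identity is a probe), so every probe $P \colon V \to Y$ lifts to a continuous $\tilde{P} \colon V \to \nu Y$ by your lifting step, whence any $U$ numerically open in $\nu Y$ satisfies $P^{-1}(U) = \tilde{P}^{-1}(U)$ open in $V$, making $U$ numerically open in $Y$ and hence open in $\nu Y$. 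Adding those two sentences would make your proof fully self-contained.
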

A continuous map $f \colon X \to Y$ between topological spaces is called a weak homotopy equivalence in $\bf Top$ if it induces an isomorphism of homotopy groups
\[
f_{\ast} \colon \pi_{n}(X,x) \to \pi_{n}(Y,f(x))
\]
for all $n > 0$ and $x \in X$.
Let $I$ be the set of boundary inclusions $S^{n-1} \to D^{n},\ n \geq 0,\ J$ the set of inclusions $D^{n} \times \{0\} \to D^{n} \times I,$
and $W_{\bf Top}$ the class of weak homotopy equivalences.
The standard model structure on $\bf Top$ can be described as follows.
\begin{thm}[{\cite[2.4.19]{Hov}}]
There is a
finitely generated model structure on $\bf Top$ with $I$ as the
set of generating cofibraitons, $J$ as the set of generating trivial
cofibrations, and $W_{\bf Top}$ as the class of weak equivalences.
\end{thm}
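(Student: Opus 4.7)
The plan is to verify the six conditions of the recognition theorem \cite[2.1.19]{Hov} for the data $(I,J,W_{\bf Top})$, just as was done in Theorem \ref{thm:cofibrant sub}. First, I would observe that $W_{\bf Top}$ satisfies two-out-of-three and is closed under retracts: the two-out-of-three property is immediate from functoriality of $\pi_n$, while closure under retracts follows from the fact that a retract of an isomorphism of groups is an isomorphism.

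Next I would establish the smallness hypotheses: the domains and codomains of $I$ and $J$ are the spheres $S^{n-1}$, the disks $D^n$, and $D^n\times I$, all of which are compact Hausdorff. A standard argument shows that a compact space $K$ is finite relative to $I$-cell (and hence to $J$-cell), because any continuous map from $K$ into a transfinite composition of closed $T_1$-inclusions built from $I$ lands in a finite stage. This simultaneously handles the second and third conditions of \cite[2.1.19]{Hov} and gives the finite generation assertion.

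The heart of the proof is to identify $J$-inj with the Serre fibrations and $I$-inj with the trivial Serre fibrations, and to verify the inclusions
\begin{itemize}
\item $J$-cell $\subseteq W_{\bf Top} \cap I$-cof,
\item $I$-inj $\subseteq W_{\bf Top} \cap J$-inj,
\item $W_{\bf Top} \cap J$-inj $\subseteq I$-inj.
\end{itemize}
The first holds because each generator $D^n\times\{0\}\hookrightarrow D^n\times I$ is a deformation retract and has the left lifting property with respect to all $I$-injectives (standard obstruction on $S^{n-1}\to D^n$), and these properties are preserved by pushouts, transfinite composition and retracts. The second requires that any map with the right lifting property with respect to all $S^{n-1}\to D^n$ is both a Serre fibration (lifting along $D^n\to D^n\times I$ follows by reparametrization) and induces isomorphisms on $\pi_n$ (by representing elements and null-homotopies as maps from $S^{n-1}$ and $D^n$).

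The main obstacle is the third inclusion, which is the classical theorem that a Serre fibration which is a weak homotopy equivalence has the right lifting property with respect to every $S^{n-1}\hookrightarrow D^n$. I would prove it by the usual argument: given a square with $p$ a trivial Serre fibration, use the long exact sequence of $p$ (with $\pi_n$ of the fibers vanishing) together with the covering homotopy property to construct the required lift, first extending up to homotopy and then correcting by lifting a homotopy to the genuine diagonal filler. Once this inclusion is in hand, \cite[2.1.19]{Hov} delivers the cofibrantly (indeed finitely) generated model structure on $\bf Top$.
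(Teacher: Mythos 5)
Your proposal is correct, but note that the paper offers no proof of its own here: the theorem is quoted verbatim from Hovey \cite[2.4.19]{Hov}, and your outline---smallness of the compact domains and codomains via transfinite compositions of closed $T_1$-inclusions, the identification of $J$-inj with Serre fibrations and $I$-inj with trivial Serre fibrations, and the classical theorem that a Serre fibration which is a weak homotopy equivalence has the right lifting property with respect to the boundary inclusions $S^{n-1} \to D^{n}$---is essentially the argument Hovey gives in his Section 2.4. So you have reconstructed the proof of the cited source rather than anything in this paper, and your sketch matches it step for step.
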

The category $\bf NG$ is complete and cocomplete by {\cite[3.4]{KKH}}.
A space $X$ is numerically generated if and only if $\nu X=X$ holds.
Thus the unit of the adjunction $\eta \colon X \to \nu  X$ is a natural homeomorphism.
Moreover, since CW-complexes are numerically generated spaces by {\cite[4.4]{KKH}},
the classes $I$ and $J$ are contained in $\bf NG$.
Let $W_{\bf NG}$ be the class of maps $f \colon X \to Y$ in $\bf NG$ which is a weak equivalence in $\bf Top$.
Since the coreflection arrow $\nu Y \to Y$,
given by the identity of $Y \in \bf Top$,
is a weak equivalence (cf.\ {\cite[5.4]{KKH}}),
we have the following by Theorem \ref{thm:cofibrant sub} and Proposition \ref{prp:Quillen equivalence}.
\begin{thm}\label{thm:Quillen equivalence in NG}
The category $\bf NG$ has a finitely generated model structure with $I$ as the set of generating cofibrations,
$J$ as the set of generating trivial cofibrations,
and $W_{\bf NG}$ as the class of weak equivalences.
Moreover the adjunction $(i,\nu,\varphi) \colon \bf NG \to Top$ is a Quillen equivalence.
\end{thm}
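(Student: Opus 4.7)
The plan is to deduce this theorem as an immediate application of Theorem \ref{thm:cofibrant sub} and Proposition \ref{prp:Quillen equivalence}, since almost all of the necessary hypotheses have already been verified in the paragraphs preceding the statement. So the proof should simply collect these facts and invoke the two results.

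For the first assertion, I would appeal to Theorem \ref{thm:cofibrant sub} applied to the inclusion $i \colon \mathbf{NG} \to \mathbf{Top}$ with right adjoint $\nu$. The model category $\mathbf{Top}$ is finitely generated by the preceding theorem, the subcategory $\mathbf{NG}$ is complete and cocomplete by \cite[3.4]{KKH}, and the unit $\eta \colon X \to \nu X$ is a natural homeomorphism because a space is numerically generated precisely when $\nu X = X$. The generating (trivial) cofibrations $I$ and $J$ have domains and codomains $S^{n-1}, D^{n}, D^{n}\times\{0\}, D^{n}\times I$, all of which are CW-complexes and hence numerically generated by \cite[4.4]{KKH}; therefore $I, J \subset \mathbf{NG}$. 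Theorem \ref{thm:cofibrant sub} then produces the desired finitely generated model structure on $\mathbf{NG}$ with generating (trivial) cofibrations $I$ (resp.\ $J$) and weak equivalences $W_{\mathbf{NG}}$, and also ensures that $(i,\nu,\varphi)$ is a Quillen adjunction.

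For the second assertion, I would invoke Proposition \ref{prp:Quillen equivalence}, whose sole remaining hypothesis is that the coreflection arrow $\epsilon \colon \nu Y \to Y$ be a weak equivalence for every fibrant $Y$ in $\mathbf{Top}$. In fact, \cite[5.4]{KKH} gives the stronger statement that the identity map $\nu Y \to Y$ is a weak homotopy equivalence for every topological space $Y$, which is more than enough. Proposition \ref{prp:Quillen equivalence} then upgrades $(i,\nu,\varphi)$ to a Quillen equivalence (and simultaneously does so for the pointed version).

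There is no real obstacle here: the substantive content lies in Theorem \ref{thm:cofibrant sub}, Proposition \ref{prp:Quillen equivalence}, and the external inputs from \cite{KKH}, so the present theorem reduces to a bookkeeping check of hypotheses. The only point that merits mention is that \cite[5.4]{KKH} supplies the weak equivalence $\nu Y \to Y$ for \emph{all} $Y$, so no separate fibrant replacement argument is needed.
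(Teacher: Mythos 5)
Your proposal is correct and follows exactly the route the paper itself takes: the paragraphs preceding the statement verify completeness and cocompleteness of $\bf NG$, the natural homeomorphism $\eta \colon X \to \nu X$, the containment of $I$ and $J$ in $\bf NG$ via \cite[4.4]{KKH}, and the weak equivalence $\nu Y \to Y$ via \cite[5.4]{KKH}, after which the theorem is obtained by citing Theorem \ref{thm:cofibrant sub} and Proposition \ref{prp:Quillen equivalence}. Your observation that \cite[5.4]{KKH} applies to all $Y$, not only fibrant $Y$, matches the paper's use of that result, so no fibrant replacement is needed in either version.
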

We turn to the case of pointed spaces.
Let $\bf Top_{\ast}$ be the category of pointed topological spaces.
By {\cite[2.4.20]{Hov}}, there is a finitely generated model structure on the category $\bf Top_{\ast}$,
with generating cofibrations $I_{+}$ and generating trivial cofibrations $J_{+}$.
Then we have the following by Theorem \ref{thm:pointed category} and Proposition \ref{prp:Quillen equivalence}.
\begin{crl}\label{crl:Quillen of pointed}
There is a finitely generated model structure on the category $\bf NG_{\ast}$ of pointed numerically generated
spaces, with generating cofibrations $I_{+}$ and generating trivial cofibrations $J_{+}$.
Moreover, the inclusion functor $i_{\ast} \colon \bf NG_{\ast} \to Top_{\ast}$ is a Quilen equivalence.
\end{crl}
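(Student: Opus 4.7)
The plan is to deduce this corollary directly from Theorem \ref{thm:pointed category} and Proposition \ref{prp:Quillen equivalence}, once it is observed that all hypotheses have already been verified in the course of proving Theorem \ref{thm:Quillen equivalence in NG}. So the proof is a matter of bookkeeping rather than new work.

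First, I would recall the facts established earlier: $\bf NG$ is complete and cocomplete by \cite[3.4]{KKH}; the unit $\eta \colon X \to \nu X$ is a natural homeomorphism because $X \in \bf NG$ exactly when $\nu X = X$; and the generating cofibrations $I$ and generating trivial cofibrations $J$ of $\bf Top$ lie in $\bf NG$, since their domains and codomains are CW-complexes and CW-complexes are numerically generated by \cite[4.4]{KKH}. Hence $\bf NG \subset \bf Top$ satisfies the hypotheses of Theorem \ref{thm:cofibrant sub}. Since $\bf Top$ is finitely generated, Theorem \ref{thm:pointed category} applies and produces the desired finitely generated model structure on $\bf NG_{\ast}$ with $I_{+}$ as generating cofibrations and $J_{+}$ as generating trivial cofibrations, together with a Quillen adjunction $(i_{\ast},\nu_{\ast},\varphi_{\ast}) \colon {\bf NG_{\ast}} \to {\bf Top_{\ast}}$.

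It remains to upgrade this Quillen adjunction to a Quillen equivalence, which is exactly the conclusion of Proposition \ref{prp:Quillen equivalence}. Its hypothesis requires that the coreflection arrow $\epsilon \colon \nu Y \to Y$ be a weak equivalence for each fibrant $Y \in \bf Top$. In the present situation more is true: the map $\epsilon$ is, underlying sets, the identity, and by \cite[5.4]{KKH} it is a weak homotopy equivalence for every $Y \in \bf Top$. Therefore the hypothesis of Proposition \ref{prp:Quillen equivalence} holds, and the induced adjunction $(i_{\ast},\nu_{\ast},\varphi_{\ast})$ is a Quillen equivalence.

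There is no real obstacle here; the mildest point to double-check is that the generating (trivial) cofibrations of $\bf Top_{\ast}$ really do coincide with $I_{+}$ and $J_{+}$ as defined in the unpointed setting (so that the statement is literally the specialization of Theorem \ref{thm:pointed category}), and that the coreflection-weak-equivalence hypothesis of Proposition \ref{prp:Quillen equivalence} transfers from $\bf Top$ to $\bf Top_{\ast}$ via the forgetful functor. Both are immediate from the construction in \cite[p.4]{Hov} and from the fact that weak equivalences in $\bf Top_{\ast}$ are detected by $U \colon {\bf Top_{\ast}} \to {\bf Top}$.
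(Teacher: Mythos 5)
Your proposal is correct and matches the paper's own route exactly: the paper derives this corollary by citing Theorem \ref{thm:pointed category} (applied to $\bf NG \subset Top$, whose hypotheses were verified before Theorem \ref{thm:Quillen equivalence in NG}) together with Proposition \ref{prp:Quillen equivalence}, using that the coreflection $\nu Y \to Y$ is a weak equivalence for every $Y$ by {\cite[5.4]{KKH}}. Your extra bookkeeping about $I_{+}$, $J_{+}$, and detection of weak equivalences by the forgetful functor is harmless and consistent with the paper's setup.
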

\begin{rmk}(1)
  The argument of Theorem \ref{thm:Quillen equivalence in NG} can be applied to the subcategories $\bf K$ of $k$-spaces and $\bf T$ of compactly generated spaces.
  Similarly,
  the argument of Corollary \ref{crl:Quillen of pointed} can be applied to the pointed categories $\bf K_{\ast}$ and $\bf T_{\ast}$.
  Compare [2.4.28], [2.4.25], [2.4.26] of \cite{Hov}.

  (2)
  Let $\bf Diff$ be the category of diffeological spaces (cf.\
  \cite{Zem}).  In \cite{KKH} we introduced a pair of functors $T:\bf
  Diff \to Top$ and $D:\bf Diff \to Top$, where $T$ is a left adjoint
  to $D$, and showed that the composite $TD$ coincides with $\nu
  \colon \bf Top \to \bf NG$.  Thus $\bf NG$ can be embedded as a full
  subcategory into $\bf Diff$.
  It is natural to ask whether $\bf Diff$ has a model category
  structure with respect to which the pair $(T,D)$ gives a Quillen
  adjuntion between $\bf Top$ and $\bf Diff$.

  Let $I$ be the unit interval, and let $\lambda \colon \mathbf{R} \to
  I$ be the smashing function, that is, a smooth function such that
  $\lambda(t) = 0$ for $t \leq 0$ while $\lambda(t) = 1$ for $t \geq
  1$.  Let $\tilde{I}$ denote the unit interval equipped with the
  quotient diffeology $\lambda_*(D_{\mathbf R})$, where $D_{\mathbf
    R}$ is the standard diffeology of $\mathbf{R}$.
  In \cite{KH} we introduce a finitely generated model category
  structure on $\bf Diff$ with the boundary inclusions $\partial
  \tilde{I}^{n-1} \to \tilde{I}^{n}$ as generating cofibrations, and
  with the inclusions $\partial \tilde{I}^{n-1} \times \tilde{I} \cup
  \tilde{I}^{n} \times \{0 \} \to \tilde{I}^{n} \times \tilde{I}$ as
  generating trivial cofibrations.  Its class of weak equivalences
  consists of those smooth maps $f \colon X \to Y$ inducing an
  isomorphism $f_* \colon \pi_{n}(X,x_{0}) \to \pi_{n}(Y,f(x_{0}))$
  for every $n \geq 0$ and $x_{0} \in X$.  Here, the homotopy set
  $\pi_n(X,x_0)$ is defined to be the set of smooth homotopy classes
  of smooth maps $(\tilde{I}^n,\partial\tilde{I}^n) \to (X,x_{0})$.

  It is expected that with respect to the model structure on $\bf
  Diff$ described above, the pair $(T,D)$
  induces a Quillen adjunction between $\bf Top$ and $\bf Diff$.
\end{rmk}
\section*{Acknowledgements.}
I would like to express my sincere gratitude to my supervisor Kazuhisa Shimakawa.
He introduced me to the project of building a homotopy theory on the category $\bf NG$.
This project was not completed without him.
He carefully read this paper,
helped me with the English and corrected many errors.
In order that I might acquire a doctor's degree,
he had supported me for a long time.

Next I would like to thank Referee.
He proposed that there exists a general framework from proofs of Theorem \ref{thm:Quillen equivalence in NG} and Corollary \ref{crl:Quillen of pointed}.
As a result,
we have Theorem \ref{thm:cofibrant sub}, Theorem \ref{thm:pointed category}, and Proposition \ref{prp:Quillen equivalence}.

\end{document}